\newcommand{\C} {\mathbb{C}}
\newcommand{\Q} {\mathbb{Q}}
\newcommand{\N}  {\mathbb{N}}
\newcommand{\F}{\mathbb{F}}
\newcommand{\Z}{\mathbb{Z}}
\newcommand{\p}{\mathfrak{p}}
\newcommand{\OO}{\mathcal{O}}
\newcommand{\PP}{\mathbb{P}}
\newcommand{\NS}{\mathop{\rm NS}}
\newcommand{\MW}{\mathop{\rm MW}}
\newcommand{\MWL}{\mathop{\rm MWL}}
\newcommand{\Km}{\mathop{\rm Km}}
\newcommand{\disc}{\mathop{\rm disc}}
\newcommand{\Aut}{\mathop{\rm Aut}}
\newcommand{\Triv}{\mathop{\rm Triv}}
\newcommand{\mX}{\mathcal X}
\newcommand{\rk}{\mathop {\rm rk}}
\newcommand{\Hom}{\mathop{\rm Hom}}
\newtheorem{Theorem}{Theorem}[section]
\newtheorem{Proposition}[Theorem]{Proposition}
\newtheorem{Lemma}[Theorem]{Lemma}
\newtheorem{Corollary}[Theorem]{Corollary}
\theoremstyle{remark}
\newtheorem{Remark}[Theorem]{Remark}
\theoremstyle{definition}
\begin{document}

\title{Dynamics on supersingular K3 surfaces}

\dedicatory{Dedicated to Tetsuji Shioda on the occasion of his 75th birthday}

\author{Matthias Sch\"utt}
\address{Institut f\"ur Algebraische Geometrie, Leibniz Universit\"at
  Hannover, Welfengarten 1, 30167 Hannover, Germany}
\email{schuett@math.uni-hannover.de}

\thanks{Funding   by ERC StG~279723 (SURFARI) 
 is gratefully acknowledged.}

\date{May 27, 2015}

\begin{abstract}
For 
any odd
characteristic $p\equiv 2\mod 3$,
we exhibit an explicit automorphism on the supersingular K3 surface of Artin invariant one
which does not lift to any characteristic zero model.
Our construction builds on elliptic fibrations to produce a closed formula
for the automorphism's characteristic polynomial on second cohomology,
which turns out to be an irreducible Salem polynomial of  degree 22 with  coefficients varying with $p$.
\end{abstract}
%
%
 \maketitle

 \section{Introduction}
 \label{s:intro}

Recently, there has been a burst of activity on dynamics of K3 surfaces,
in particular for supersingular ones in positive characteristic. 
This paper contributes to this area with rather explicit results.

The entropy of automorphisms of algebraic varieties (or K\"ahler manifolds over $\C$)
is related to Salem polynomials.
Often one aims for  Salem numbers
which are either small or have large degree.
On K3 surfaces, the maximum degree equals the second Betti number $b_2=22$,
but it can only be attained in two specific settings:
either on non-projective complex K3 surfaces which contain no algebraic curves at all, as studied by McMullen \cite{McM},
or on supersingular K3 surfaces in characteristic $p>0$.

A specific feature of automorphisms of maximal Salem degree 
on supersingular K3 surfaces
was pointed out by Esnault and Oguiso \cite{EO}:
such an automorphism cannot lift to any characteristic zero model of the K3 surface.
Esnault and Oguiso illustrated this phenomenon explicitly with the Fermat quartic  in characteristic $3$,
building on work of Kond\=o and Shimada \cite{KS}.
On the implicit side, there are non-liftability results
for the supersingular K3 surface $X(p)$ of Artin invariant $\sigma=1$ in characteristic $p\neq 5,7,13$ in \cite{BC}, \cite{EOY}.
Our aim is to exhibit non-liftable automorphisms on
an infinite series of supersingular K3 surfaces in an explicit and systematic manner:

\begin{Theorem}
\label{thm}
For any odd prime $p\equiv 2\mod 3$,
there exists an explicit $g\in\Aut(X(p))$ of Salem degree $22$.
No power $g^r \; (r\in\Z\setminus\{0\})$ lifts to any characteristic zero model of $X(p)$.
\end{Theorem}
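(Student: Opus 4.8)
The plan is to produce $X(p)$ as an explicit elliptic K3 surface, to build $g$ from fibration-related automorphisms, and to compute its action on second cohomology lattice-theoretically. \emph{Step 1: the elliptic model.} First I would exhibit a Weierstrass equation over $\F_{p^2}$ whose associated elliptic K3 surface $X$ is, for odd $p\equiv 2\bmod 3$, supersingular of Artin invariant $1$; by uniqueness of the supersingular K3 surface of given Artin invariant this gives $X\cong X(p)$. Supersingularity would follow from the Tate conjecture for elliptic K3 surfaces together with a point count over $\F_{p^2}$, or more directly by building $X$ over the supersingular elliptic curve $y^2=x^3+1$ (supersingular exactly when $p\equiv 2\bmod 3$). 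I would record the Kodaira types of the singular fibres and the Mordell--Weil group $\MW(X)$, and confirm $\disc\NS(X)=-p^2$ through the Shioda--Tate decomposition $\NS(X)\otimes\Q=\Triv(X)\otimes\Q\oplus\MWL(X)\otimes\Q$.

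\emph{Step 2: the automorphism, and Step 3: its action.} I would then set $g=h\circ T_P$, with $T_P$ translation by a carefully chosen section $P\in\MW(X)$ and $h$ an automorphism that does \emph{not} preserve the fibration --- obtained either from a second elliptic pencil on $X$, or as the deck transformation of a finite quotient, or from a manifest symmetry of the Weierstrass model. Mixing a translation with such an $h$ is what destroys every $g^*$-stable rational subspace of $\NS(X)\otimes\Q$. Using Shioda's formulas for the action of $T_P$ on $\NS(X)$ in terms of the height pairing, together with the explicit permutation action of $h$ on the zero section and the fibre components, I would assemble $g^*$ as a $22\times 22$ integral matrix in a convenient basis and compute its characteristic polynomial $\chi_g(x)$ in closed form; its coefficients depend on $p$ through the data of Steps~1--2.

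\emph{Step 4: Salem and irreducible.} Since $g^*\in O(\NS(X))$ and $\NS(X)$ has signature $(1,21)$, $\chi_g$ is a reciprocal polynomial; I would check it has a unique root $\lambda>1$ with all other roots on the unit circle (a routine spectral computation once $\chi_g$ is in hand, and one showing $g$ has positive entropy), and --- the delicate point --- that $\chi_g$ is irreducible over $\Q$ \emph{uniformly in} $p$. For irreducibility I would write $\chi_g(x)=F(x,c(p))$ for a fixed bivariate polynomial $F$ and exclude a cyclotomic factor (a reciprocal polynomial of this shape, if reducible, splits as a product of cyclotomic polynomials and a Salem polynomial of smaller degree), for instance by evaluating at roots of unity or by a congruence argument; this establishes that $g$ has Salem degree $22$.

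\emph{Step 5: non-liftability of all powers.} Fix $r\in\Z\setminus\{0\}$ and suppose $g^r$ lifted to an automorphism $\tilde g$ of a characteristic-zero K3 surface $\mathcal X$ (lifting $X(p)$ over a complete mixed-characteristic DVR). Then $\tilde g^*$ respects the Hodge structure on $H^2(\mathcal X_\C,\Z)$ and acts on the line $H^{2,0}$ by a root of unity (classical for K3 surfaces), hence acts on the transcendental lattice $T(\mathcal X)$ with characteristic polynomial a product of cyclotomic polynomials; since every complex K3 surface has $\rho\le 20$, so $\rk T(\mathcal X)\ge 2$, the Salem factor of the characteristic polynomial of $\tilde g^*$ on $H^2$ has degree at most $20$ --- this is the Esnault--Oguiso obstruction. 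By smooth and proper base change this polynomial equals the characteristic polynomial of $(g^r)^*$ on $\het{X_{\overline{\F}_p}}$. But by Step~4 the latter has spectral radius $\lambda^r$, and a nonzero power of a Salem number of degree $22$ is again a Salem number of degree $22$ (the degree $[\Q(\lambda^r):\Q]$ divides $22$, and $1$, $2$ and $11$ are ruled out by counting how many conjugates of $\lambda^r$ can lie off the unit circle), so $(g^r)^*$ has Salem degree $22$ --- a contradiction. The heart of the argument is Steps~3--4: extracting $\chi_g$ in closed form and, above all, proving its irreducibility simultaneously for all admissible $p$; Step~1 (lattice-theoretic construction of $X(p)$) and Step~5 (the Esnault--Oguiso obstruction together with base change) follow established patterns.
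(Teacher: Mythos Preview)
Your strategy is essentially the paper's: realise $X(p)$ via an isotrivial elliptic fibration with $j=0$ (so supersingular precisely for $p\equiv 2\bmod 3$), build $g$ out of Mordell--Weil translations and a symmetry not preserving the fibration, compute $\mu(g^*)$ on $\NS(X(p))$ in closed form with coefficients polynomial in $p$, rule out every cyclotomic divisor of degree $\le 22$ to force irreducibility, and then invoke the Esnault--Oguiso obstruction for non-liftability of powers. Your Step~5 argument (degree of $\lambda^r$ divides $22$; degrees $1,2,11$ excluded by counting roots off the unit circle and parity) is correct and slightly more detailed than the paper's one-line appeal to irreducibility of the power.

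The one place where your outline is over-optimistic is Step~2. A single product $g=h\circ T_P$ is very unlikely to yield an \emph{irreducible} Salem polynomial of degree $22$, and the paper's experience bears this out: it uses \emph{four} building blocks --- translations $\tau,\tau^\omega$ by $P,\omega P$ on the isotrivial fibration $\pi$, a translation $\tau'$ coming from a \emph{second} fibration $\pi'$ (the $I_{18}$ pencil visible in the same curve configuration), and an involution $\imath$ exchanging two incarnations of $\pi$ --- and takes an eleven-fold composition
\[
g=\tau'\circ\imath\circ\tau'\circ\imath\circ\tau^\omega\circ\imath\circ\tau'\circ\imath\circ\tau'\circ\imath\circ\tau
\]
before the characteristic polynomial becomes irreducible for all $n$ simultaneously (ninefold words already work, but with messier coefficients). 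So the ``carefully chosen'' in your Step~2 hides real combinatorial search: you will need translations from at least two distinct fibrations, not just one $T_P$, and a longer word in these generators. Once you accept that, your Steps~3--5 go through exactly as in the paper, including the irreducibility proof by dividing the closed-form $\mu(g^*)$ by each cyclotomic polynomial of degree $\le 22$ and checking the remainder is nonzero as a polynomial in $n$.
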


The key ingredient of our approach consists in the theory of elliptic fibrations on K3 surfaces
and in particular Mordell-Weil lattices,
since these are at the same time accessible, versatile and allow for explicit descriptions
of automorphisms.
We will review the basics on elliptic fibrations and dynamics
in the next two sections,
before explaining the construction of the automorphisms proving Theorem \ref{thm}.
We point out that the techniques are flexible enough
to lend themselves to the study of dynamics on other K3 surfaces,
both over $\C$ and in positive characteristic,
either of which we hope to pursue in future work.

\section{Elliptic fibrations}
\label{s:ell}

Let $X$ be a K3 surface over an algebraically closed field $k=\bar k$:
\[
\omega_X\cong \OO_X,\;\;\; h^1(X,\OO_X)=0.
\]
Classical examples are smooth quartics in $\PP^3$ or more specifically Kummer surfaces,
i.e. the resolution of the quotient of an abelian surface $A$ by inversion $\imath$ with respect to the group law 
(outside characteristic $2$):
\[
\Km(A) = \widetilde{A/\imath}.
\]
In what follows we will restrict for simplicity to elliptic K3 surfaces,
although much of what is stated holds true in much greater generality (see \cite{SSh}).
An elliptic fibration on $X$ is a surjective morphism
\[
\pi: \; X\to \PP^1
\]
such that the generic fiber is a smooth curve of genus $1$.
Here we will only deal, without further distinction, with jacobian elliptic fibrations,
i.e. the morphism $\pi$ admits a section.
This makes the generic fiber $E$ into an elliptic curve over the function field $k(t)$
where $t$ denotes an affine parameter of $\PP^1$.
One derives an extensive correspondence between $X$ and $E$
as either determines the other (see \cite{Neron}).
Notably this materializes in a bijection between
sections of $\pi$ and rational points of $E$.
Either set thus forms a group which is usually referred to as \emph{Mordell-Weil group} $\MW(X)$,
or if we need to specify the fibration, $\MW(X,\pi)$.
In order to endow $\MW(X)$ with a lattice structure following Shioda \cite{Sh-MW},
we introduce the trivial lattice
\[
\Triv(X) = \langle \text{zero section, fiber components}\rangle \subset\NS(X).
\]
As a consequence of Kodaira's classification of singular fibers \cite{K}
(or of Tate's algorithm for the non-complex setting \cite{Tate}),
the trivial lattice decomposes as an orthogonal sum
of 
\begin{itemize}
\item 
the hyperbolic plane $U$ generated by zero section $O$ and general fiber $F$, and 
\item
negative-definite root lattices of Dynkin type $A_n, D_m, E_l$
generated by the fiber components after omitting the identity component
(i.e. the component meeting $O$).
\end{itemize}
Note that  $\Triv(X)$ is hyperbolic (i.e. of signature $(1,{\rk}\Triv(X)-1)$) and that 
the indices of the root lattices equal the number of fiber components minus one
which leads to a straight forward rank formula for $\Triv(X)$.
The key theorem for (jacobian) elliptic fibrations states
that any divisor in $\NS(X)$ can be written as a sum of horizontal and vertical divisors,
or more precisely, in terms of sections and fiber components:

\begin{Theorem}
[Shioda \cite{Sh-MW}]
\label{thm:E}
There is an isomorphism of groups:
\[
 \MW(X) \cong \NS(X)/\Triv(X).
 \]
\end{Theorem}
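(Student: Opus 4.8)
The plan is to compare $\NS(X)$ with the Picard group of the generic fiber $E$, an elliptic curve over the field $k(t)$. Since $X$ is a K3 surface we have $h^1(X,\OO_X)=0$, hence $\Pic^0(X)=0$ and $\Pic(X)=\NS(X)$; moreover an elliptic K3 surface carries no $(-1)$-curves (by adjunction, using $\omega_X\cong\OO_X$), so $\pi$ is relatively minimal and its fibers are precisely the effective vertical divisors. I would build the desired isomorphism from the restriction map $\NS(X)=\Pic(X)\to\Pic(E)$, $D\mapsto D|_E$.

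The first key step is the exact sequence
\[
\bigoplus_{v\in\PP^1}\langle\text{components of }\pi^{-1}(v)\rangle \longrightarrow \Pic(X)\longrightarrow\Pic(E)\longrightarrow 0 .
\]
Surjectivity on the right holds because any divisor on the curve $E/k(t)$ is the restriction of its Zariski closure in $X$; exactness in the middle is the statement that a divisor on $X$ restricting to a principal divisor on $E$ differs from a vertical divisor by a principal divisor on $X$ (subtract the divisor of the relevant rational function, viewed on $X$). The image of the left-hand term is the subgroup generated by all components of all fibers; since a general fiber $F$ is linearly equivalent to any special fiber, which is a positive combination of its own components, this subgroup equals $\langle F,\ \text{non-identity components}\rangle$, so that $\Triv(X)=\langle O\rangle+\langle\text{fiber components}\rangle$.

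On the target side, $E$ has the rational point $O$, so the degree map splits and $\Pic(E)\cong\Z\,[O]\oplus\Pic^0(E)$, while by the very definition of the Mordell--Weil group law $\Pic^0(E)\cong E(k(t))=\MW(X)$ via $P\mapsto[(P)-(O)]$. Combining, $\Pic(X)/\langle\text{fiber components}\rangle\cong\Z\,[O]\oplus\MW(X)$, and quotienting further by the class of the zero section $O$ (which restricts to $[O]$, the generator of the $\Z$-factor) yields $\NS(X)/\Triv(X)\cong\MW(X)$. Explicitly the isomorphism sends a section $P$ to the class of the curve $(P)$ modulo $\Triv(X)$; it is a homomorphism because $(P+Q)+(O)-(P)-(Q)$ restricts to a principal divisor on $E$ and is hence trivial modulo $\Triv(X)$, and it is injective since $(P)\in\Triv(X)$ forces, by intersecting with $F$ (using $(P)\cdot F=O\cdot F=1$, $F^2=0$, $\Theta\cdot F=0$ for fiber components $\Theta$), that $(P)-(O)$ be vertical, hence zero in $\Pic^0(E)$, i.e. $P=O$. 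I expect the exact sequence above --- concretely, the verification that a divisor restricting trivially to the generic fiber lies in $\langle\text{fiber components}\rangle$ modulo linear equivalence --- to be the main technical point; once it and the structure of $\Pic$ of a genus-one curve over a field are in place, the rest is formal.
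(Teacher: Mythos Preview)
The paper does not supply its own proof of this theorem: it is quoted as a result of Shioda, with reference to \cite{Sh-MW}, and is used as a black box throughout. So there is nothing in the paper to compare against.

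That said, your argument is correct and is essentially the standard proof given in Shioda's original paper: restrict divisors to the generic fiber to obtain a surjection $\Pic(X)\to\Pic(E)$ whose kernel is the subgroup generated by fiber components, then split off the degree on $\Pic(E)$ using the zero section and identify $\Pic^0(E)$ with $E(k(t))$ via Abel--Jacobi. Your identification of the kernel with $\langle F,\text{non-identity components}\rangle$ is the point where one needs that the fibration is relatively minimal (so that the only relation among components of a single fiber is the full fiber itself, up to multiplicity); you have noted this. The only minor redundancy is the explicit injectivity check at the end, which is already contained in the exact sequence, but it does no harm.
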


It is precisely this isomorphism
which will allow us to make both many automorphisms on K3 surfaces
and their induced actions on cohomology (or $\NS(X)$) explicit.
To this end, we only have to add a lattice structure on the Mordell-Weil group,
or rather its quotient by the torsion subgroup,
by means of the orthogonal projection in $\NS(X)\otimes\Q$ with respect to $\Triv(X)$.
Reversing the sign of the intersection pairing, this makes
\[
\MWL(X) = \MW(X)/\text{torsion}
\]
into a positive definite lattice (though not necessarily integral).

\section{Automorphisms and Salem numbers}
\label{s:salem}

In this section, we will consider a K3 surface $X$ equipped with an automorphism $g\in\Aut(X)$.
For instance, one can think of a linear transformation of the ambient projective space
which preserves $X$.
However, such an automorphism is necessarily of finite order since it leaves the hyperplane section invariant
and thus acts on its orthogonal complement in $\NS(X)$, a negative-definite lattice,
whence  the claim can be seen as a consequence of  the Torelli theorem
(although it holds true in greater generality).
Given an elliptic fibration, it is equally instructive to consider translation by a rational point $P$ on the generic fiber $E$
which extends to an automorphism of $X$, the order of which equals the order of $P\in E(k(t))$.

In what follows, we work with $\ell$-adic \'etale cohomology
$H^2_\text{\'et}(X,\Q_\ell(1))$ after applying a Tate twist
(in characteristic $p\neq \ell$ -- over $\C$, we could equally well work with singular cohomology). 
For shortness, we will only write $H^2(X)$
and compute the characteristic polynomial 
\[
\mu(g^*; H^2(X)) \in \Z[x]
\]
(which is indeed independent of $\ell$ and integral by general theory).
The relation with dynamics is fostered by the occurrence of Salem polynomials.
A monic irreducible polynomial $f\in\Z[x]$ of degree $2d$ is called \emph{Salem polynomial}
if it has $2d-2$ roots on the unit circle plus two real positive roots $\alpha, 1/\alpha$;
as a convention, we denote the root with absolute value greater than $1$ by $\alpha$.

\begin{Theorem}
\label{thm:Salem}
The characteristic polynomial
$\mu(g^*; H^2(X))$ factors into cyclotomic polynomials and at most one Salem polynomial.
\end{Theorem}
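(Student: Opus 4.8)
The plan is to exploit the positivity properties of the intersection form on $H^2(X)$, which has signature $(1,21)$ over $\C$ (and behaves $\ell$-adically like the Néron–Severi lattice in the supersingular case). Since $g^*$ preserves the cup-product pairing, $\mu(g^*;H^2(X))$ is a monic integral polynomial that is \emph{reciprocal}: its roots come in pairs $\lambda,1/\lambda$ (this uses $\det(g^*)=\pm1$ together with the pairing, and one checks the sign is $+1$ so the polynomial genuinely satisfies $x^{22}\mu(1/x)=\mu(x)$). Consequently $\mu$ factors over $\Z$ into irreducible reciprocal polynomials, and the only irreducible reciprocal integral polynomials whose roots can fail to lie on the unit circle are, by a classical argument (Kronecker's theorem plus the reciprocal constraint), either cyclotomic polynomials or Salem polynomials. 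So the real content is to show \emph{at most one} Salem factor occurs, with the remaining factors all cyclotomic.

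First I would invoke the key geometric input: $g^*$ preserves an ample class, or at least the nef cone, hence it preserves the positive cone $\mathcal C\subset H^2(X)_\R$ — the connected component of $\{v : v^2>0\}$ containing ample classes. Spectrally, the signature-$(1,21)$ form means the $g^*$-invariant subspace on which the form is positive definite has dimension $\le 1$. Now suppose $\mu$ had two distinct Salem factors $f_1,f_2$ with dominant roots $\alpha_1>1$, $\alpha_2>1$. Each real eigenvalue $\alpha_i$ (being $>1$) forces an eigenvector $v_i$ with $v_i^2=0$ (an eigenvector with eigenvalue $\ne\pm1$ is always isotropic, since $v^2=(g^*v)^2=\alpha_i^2 v^2$); moreover $v_1\cdot v_2=0$ for the same reason ($\alpha_1\alpha_2\ne1$). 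Pairing $v_i$ with the eigenvector $w_i$ for $1/\alpha_i$ gives a hyperbolic plane $\langle v_i,w_i\rangle$ of signature $(1,1)$, and these two planes are mutually orthogonal, producing a rank-$4$ subspace of signature $(2,2)$ inside $H^2(X)$ — contradicting signature $(1,21)$, which permits only one positive direction. Hence there is at most one Salem factor.

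It remains to argue that every non-Salem irreducible factor is cyclotomic. Having pinned down (at most) one Salem factor $f$, whose $2d-2$ unimodular roots together with $\alpha,1/\alpha$ account for a signature-$(1,1)$ piece plus a negative-definite piece of rank $2d-2$, the orthogonal complement $W$ of the span of the $f$-eigenvectors is a $g^*$-invariant subspace on which the form is negative definite (signature $(0,22-2d)$). On a definite lattice $g^*|_W$ is an isometry of a definite integral quadratic form, hence of finite order, so its characteristic polynomial is a product of cyclotomic polynomials; and if no Salem factor occurs at all, the same argument applies to all of $H^2(X)$ after splitting off the at-most-one-dimensional positive part. I expect the main obstacle to be the bookkeeping in the $\ell$-adic / characteristic-$p$ setting: one must ensure the intersection form on $H^2_{\text{ét}}(X,\Q_\ell(1))$ retains the real signature properties used above — for supersingular K3 surfaces this follows because $g^*$ acts through its action on $\NS(X)$, which carries a real quadratic form of signature $(1,21)$ — so the key step is really to reduce the $\ell$-adic statement to this lattice-theoretic one before running the positivity argument.
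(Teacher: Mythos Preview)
Your approach is essentially McMullen's signature argument, which is also what the paper invokes, but there are two concrete gaps.

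First, the cup product on $H^2(X,\R)$ for a complex K3 surface has signature $(3,19)$, not $(1,21)$; with three positive directions your hyperbolic-plane count would permit more than one pair of eigenvalues off the unit circle. McMullen's actual argument first uses Hodge theory: $g^*$ preserves $(H^{2,0}\oplus H^{0,2})_\R$, a positive-definite plane on which the eigenvalues therefore lie on the unit circle, and only then runs the signature-$(1,19)$ argument on $H^{1,1}_\R$. Relatedly, your assertion that an irreducible reciprocal integral polynomial is automatically cyclotomic or Salem is false as a standalone claim about polynomials---there exist such polynomials with several pairs of roots off the unit circle---so this dichotomy is a \emph{consequence} of the signature constraint, not an independent input to it.

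Second, in positive characteristic with $X$ not supersingular, the step you flag as ``bookkeeping'' is the entire difficulty. There is no real form or signature on $T_\ell(X)\subset H^2_\text{\'et}(X,\Q_\ell(1))$, so your definiteness argument cannot show that the eigenvalues there are roots of unity. This is precisely why the paper singles out Esnault--Srinivas \cite{ES} as essential input: their theorem that $g^*$ has finite order on the orthogonal complement of the span of the $g^*$-iterates of a polarization is a substantial result, not a formality, and without it one cannot reduce to $\NS(X)$. With it in hand, your signature argument on $\NS(X)_\R$ (which genuinely has signature $(1,\rho-1)$) finishes the proof along the lines of \cite[Prop.~3.1]{EO}, exactly as the paper indicates.
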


Over $\C$, the result is due to McMullen \cite{McM}.
In positive characteristic,
it relies on essential input of a result by Esnault and Srinivas  \cite{ES}
which implies that $g^*$ is finite on the orthogonal complement of the 
span  of all the $g^*$-iterates of any polarization on $X$.
In particular, $g^*$ can only have positive entropy on a subspace of $\NS(X)$
which then necessarily leads to a Salem polynomial by \cite[Prop. 3.1]{EO}
(which again draws  on \cite{McM}).

We can now define the  entropy of $g$:
\[
h(g) = \begin{cases} 0 & \text{ if $\mu(g^*; H^2(X))$ factors completely into cyclotomic polynomials,}\\
\log \alpha & \text{ if  there is a Salem polynomial $f\mid\mu(g^*; H^2(X))$ 
with real root $\alpha>1$ }
\end{cases}
\]
We point out that this notion is consistent with the topological entropy from complex dynamics
(compare \cite{ES}).

For our purposes, it will be crucial that $\NS(X)\otimes\Q_\ell$ embeds as a direct summand into $H^2(X)$
via the cycle class map.
Denote the orthogonal complement of $\NS(X)\otimes\Q_\ell$ with respect to cup-product by
\[
T_\ell(X) = (\NS(X)\otimes\Q_\ell)^\perp\subset H^2(X).
\]
This notation is consistent with the transcendental lattice $T(X)\subset H^2(X,\Z)$ of a complex K3 surface $X$
as  in that case $T_\ell(X) = T(X)\otimes \Q_\ell$.
Clearly $g^*$ preserves $\NS(X)$,
so the above direct sum decomposition of $H^2(X)$  is compatible with the $g^*$-action,
and we obtain a factorisation
\[
\mu(g^*; H^2(X)) = \mu(g^*; T_\ell(X)) \, \mu(g^*; \NS(X))
\]
over $\Z$ (since $\NS(X)$ is a lattice over $\Z$).
The crucial dynamical restriction for our purposes is the following observation
which seems to be due to Oguiso in the complex case;
for positive characteristic,
the argument has been sketched just below Theorem \ref{thm:Salem}.

\begin{Theorem}
\label{thm:NS}
If $X$ is algebraic, then a Salem polynomial can occur only on $\NS(X)$,
i.e. it necessarily divides $\mu(g^*; \NS(X))$.
\end{Theorem}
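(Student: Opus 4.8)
The plan is to exploit the Hodge-index-type positivity built into the cohomology of an algebraic K3 surface, together with the factorisation $\mu(g^*; H^2(X)) = \mu(g^*; T_\ell(X))\,\mu(g^*; \NS(X))$ already established above. First I would recall that if $X$ is algebraic, then $\NS(X)$ carries a polarization class $h$ with $h^2>0$, so the intersection form on $\NS(X)$ has signature $(1,\rho-1)$ and, dually, the orthogonal complement $T_\ell(X)\subset H^2(X)$ inherits a form that is negative definite on the real points in the relevant sense — more precisely, over $\C$ the real Hodge structure on $T(X)$ has signature governed by $h^{2,0}=1$, giving a positive-definite part of dimension $2$ and a negative-definite part; in positive characteristic one uses instead that $g^*$ acts on $T_\ell(X)$ preserving a form whose relevant real structure forces all eigenvalues onto the unit circle. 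The cleanest way to phrase the key step uniformly is: the eigenvalues of $g^*$ on $T_\ell(X)$ all have absolute value $1$.

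The main body of the proof is then this eigenvalue bound on $T_\ell(X)$. Over $\C$ I would argue as in McMullen: $g$ acts on the one-dimensional space $H^{2,0}(X)=\C\omega_X$ by a scalar $\delta$ of absolute value $1$ (since $g$ is an automorphism it preserves, up to scalar, the holomorphic $2$-form, and $\int_X \omega\wedge\bar\omega>0$ is preserved), and likewise on $H^{0,2}$; since $T(X)$ is a rational sub-Hodge structure containing $H^{2,0}\oplus H^{0,2}$ with complement inside $T(X)\otimes\R$ on which the form is negative definite and $g^*$ acts by an isometry, every eigenvalue on that complement lies on the unit circle as well. In characteristic $p$ the substitute is exactly the Esnault–Srinivas input flagged in the excerpt just after Theorem \ref{thm:Salem}: after choosing a polarization $L$, the span of the $g^*$-orbit of $L$ lies in $\NS(X)\otimes\Q_\ell$, and $g^*$ is finite (of finite order) on its orthogonal complement, which in particular contains $T_\ell(X)$; hence all eigenvalues of $g^*$ on $T_\ell(X)$ are roots of unity, a fortiori of absolute value $1$.

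Granting that, the conclusion is immediate: a Salem polynomial $f$ has, by definition, a real root $\alpha>1$, so any factor of $\mu(g^*; H^2(X))$ equal to $f$ must contribute an eigenvalue of $g^*$ of absolute value $>1$; since every eigenvalue of $g^*$ on $T_\ell(X)$ has absolute value $1$, the polynomial $f$ cannot divide $\mu(g^*; T_\ell(X))$. As $f$ is irreducible and $\mu(g^*; H^2(X)) = \mu(g^*; T_\ell(X))\,\mu(g^*; \NS(X))$ with both factors in $\Z[x]$, it follows that $f \mid \mu(g^*; \NS(X))$, which is the assertion.

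I expect the main obstacle to be the uniform treatment of the eigenvalue bound on $T_\ell(X)$ across both settings — over $\C$ it is the standard Hodge-theoretic argument, but in characteristic $p$ one genuinely needs the finiteness statement of Esnault–Srinivas rather than any naive positivity, and one should be careful that it applies to the whole of $T_\ell(X)$ (not merely a summand). Since the excerpt explicitly permits assuming the results stated earlier, I would simply cite the discussion following Theorem \ref{thm:Salem} for this step; the rest of the argument is then the short formal deduction above. A remark worth inserting is that algebraicity is used precisely to guarantee the existence of a polarization $h$ with $h^2>0$, without which the splitting and the finiteness input both fail — this is exactly what distinguishes the K3 case here from McMullen's non-projective complex examples, where the Salem factor can instead sit on the transcendental part.
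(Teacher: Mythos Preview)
Your proposal is correct and follows essentially the same route the paper indicates: the paper does not spell out a separate proof for this theorem but refers back to the sketch below Theorem~\ref{thm:Salem}, namely the Esnault--Srinivas finiteness of $g^*$ on the orthogonal complement of the $g^*$-orbit of a polarization (which contains $T_\ell(X)$) in positive characteristic, together with the McMullen/Oguiso Hodge-theoretic argument over $\C$. Your deduction that the Salem factor, having a real root $\alpha>1$, therefore cannot divide $\mu(g^*;T_\ell(X))$ and hence must divide $\mu(g^*;\NS(X))$ is exactly the intended conclusion.
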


Over $\C$, we can be a little more explicit about the other factors:

\begin{Lemma}
\label{lem:T}
If $X$ is a complex algebraic K3 surface,
then $\mu(g^*; T(X))$ is a perfect power of a cyclotomic polynomial.
\end{Lemma}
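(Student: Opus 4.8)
The plan is to exploit the Hodge structure on $T(X)$ together with the fact that $g^*$ acts on it as an isometry of finite order on a key line. First I would recall that for a complex K3 surface the transcendental lattice $T(X)$ carries a weight-two Hodge structure with $T(X)\otimes\C = H^{2,0}\oplus H^{1,1}_{T}\oplus H^{0,2}$, where $H^{2,0}=\C\omega_X$ is one-dimensional and spanned by a nowhere-vanishing holomorphic $2$-form. Since $g$ is an automorphism, $g^*\omega_X = \lambda\,\omega_X$ for some $\lambda\in\C^*$; thus $\lambda$ is an eigenvalue of $g^*$ on $T(X)\otimes\C$. The central classical input (due to Nikulin, and used by McMullen in \cite{McM}) is that this $\lambda$ is in fact a root of unity: indeed $g^*$ preserves the lattice $T(X)$, so all its eigenvalues are algebraic integers with all conjugates of absolute value... here one must be slightly careful, since $g^*$ on $T(X)$ need not have finite order. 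The right statement is that the action of $g^*$ on $T(X)$ is \emph{semisimple} and its eigenvalues are all roots of unity; equivalently $\mu(g^*;T(X))$ is a product of cyclotomic polynomials with no real root other than possibly $\pm 1$.

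Second, I would upgrade "product of cyclotomics" to "power of a single cyclotomic." The mechanism is that $T(X)$ is an \emph{irreducible} Hodge structure: it contains no proper nonzero sub-Hodge-structure, because any such would have to contain or avoid the line $\C\omega_X$, and a rational sub-Hodge structure containing $\omega_X$ is all of $T(X)$ by definition of the transcendental lattice (the smallest primitive sublattice whose complexification contains $\omega_X$), while one avoiding it would be of type $(1,1)$ hence algebraic, contradicting $T(X)\perp\NS(X)$. Now $g^*$ commutes with the Hodge structure (it preserves $H^{2,0}$), so the $\lambda$-eigenspace, or rather the kernel of $\Phi_n(g^*)$ for the $n$-th cyclotomic polynomial $\Phi_n$ killing $\lambda$, is a sub-Hodge-structure over $\Q$ (after collecting Galois conjugates of $\lambda$); since it contains $\omega_X$, irreducibility forces it to be all of $T(X)\otimes\Q$. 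Hence $\mu(g^*;T(X))$ is a power of $\Phi_n$.

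Third, I would clean up the two points that need genuine justification. The claim that the eigenvalue $\lambda$ on $\omega_X$ is a root of unity: one argument is that $g^*$ preserves the positive-definite form on $H^{2,0}\oplus H^{0,2}$ (real two-plane) given by the intersection form, so $|\lambda|=1$; combined with $\lambda$ being an algebraic integer all of whose conjugates arising from $T(X)$ — here one invokes that the same argument applied to the Hodge structure shows every eigenvalue of $g^*$ on $T(X)$ has absolute value $1$, since the Hodge-Riemann form is definite on the relevant pieces — Kronecker's theorem then gives that $\lambda$ is a root of unity. This is the step I expect to be the main obstacle to state cleanly, because it requires knowing that $g^*$ acts on $T(X)$ with \emph{all} eigenvalues on the unit circle, which is exactly Theorem \ref{thm:NS} combined with the fact that $g^*|_{T(X)}$ has no cyclotomic-free factor: a Salem factor is excluded by Theorem \ref{thm:NS}, and any non-cyclotomic factor of a finite-order-like isometry would produce an eigenvalue off the unit circle, contradicting definiteness of Hodge-Riemann on the transcendental part. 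Once that is in hand, the semisimplicity of $g^*$ on $T(X)$ follows (a lattice isometry with all eigenvalues roots of unity on an irreducible Hodge structure is automatically semisimple here), and the irreducibility argument above then finishes the proof that $\mu(g^*;T(X))=\Phi_n^{k}$ where $k=\dim_\Q T(X)\otimes\Q/\deg\Phi_n$.
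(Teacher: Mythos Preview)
Your argument is essentially correct and in fact blends the paper's two proofs: the main proof invokes Theorems~\ref{thm:Salem} and~\ref{thm:NS} to conclude that $\mu(g^*;T(X))$ is a product of cyclotomic polynomials, then uses the irreducibility of the Hodge structure on $T(X)$ to force a single cyclotomic factor; Remark~\ref{rem} instead starts from the known fact that $g^*$ acts on $H^{2,0}$ by a root of unity and applies irreducibility directly, bypassing Theorems~\ref{thm:Salem} and~\ref{thm:NS}. Your second paragraph is exactly this irreducibility step, and it is clean.

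One claim in your third paragraph is wrong, however: you assert that every eigenvalue of $g^*$ on $T(X)$ has absolute value $1$ ``since the Hodge-Riemann form is definite on the relevant pieces.'' The intersection form on $T(X)\otimes\R$ has signature $(2,\rk T(X)-2)$, hence is indefinite, and an isometry of an indefinite form can certainly have eigenvalues off the unit circle --- this is precisely how Salem polynomials arise. (If your argument worked, it would apply equally to all of $H^2$ and exclude Salem factors altogether.) You recover by invoking Theorem~\ref{thm:NS}, so the overall proof survives, but the Kronecker detour is then superfluous: once Theorems~\ref{thm:Salem} and~\ref{thm:NS} are in hand, $\mu(g^*;T(X))$ is already a product of cyclotomics, and irreducibility finishes immediately. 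Alternatively, as in Remark~\ref{rem}, cite directly that $\lambda$ is a root of unity (a standard fact for automorphisms of algebraic K3 surfaces) and skip Theorems~\ref{thm:Salem}, \ref{thm:NS} entirely.
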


\begin{proof}
From Theorems \ref{thm:Salem} and \ref{thm:NS},
we know that $\mu(g^*; T(X))$
is a product of cyclotomic polynomials.
But then the transcendental lattice $T(X)$ is endowed with a Hodge structure
which is irreducible over $\Q$ (by definition, since $p_g=1$).
Hence all the irreducible factors are the same.
\end{proof}

\begin{Remark}
\label{rem}
There is an alternative proof without reference to Theorems \ref{thm:Salem} and \ref{thm:NS}:
For an algebraic K3 surface, it is known that any automorphism $g$ 
acts by a root of unity on the regular 2-forms.
By standard comparison theorems,
this root occurs as an eigenvalue of $g^*$ on $H^2(X,\Z)$,
and by the very definition through the Hodge structure, on $T(X)$.
Then the irreducibility gives the claim.
\end{Remark}

Note the immediate consequence of Theorem \ref{thm:NS}
that on an algebraic K3 surface in characteristic zero,
an automorphism can have a Salem factor of its characteristic polynomials
of degree at most $20$ only.
(This is the reason why for Salem degree $22$, previous work was concentrating
on non-projective K3 surfaces, see McMullen \cite{McM}.)

K3 surfaces $X$ in positive characteristic $p>0$ come with the advantage
that they allow for Salem polynomials of degree $22$ attained on $\NS(X)$
if the latter has rank $22$,
i.e. if $X$ is supersingular (in Shioda's sense if $p=2$).
To see this at work,
it seems to suffice with the supersingular K3 surface of Artin invariant $\sigma=1$,
unique up to isomorphism by work of Ogus \cite{Ogus}
and  denoted by $X(p)$.

\begin{Theorem}[Blanc-Cantat, Esnault-Oguiso-Yu, Shimada]
\label{thm:22}
For any prime $p$,
there is an automorphism $g\in\Aut(X(p))$ of Salem degree $22$.
\end{Theorem}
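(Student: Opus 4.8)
The plan is to translate the statement into lattice theory and then produce the required automorphism, either explicitly via elliptic fibrations or via the crystalline Torelli theorem. Since $X(p)$ is supersingular, the cycle class map identifies $\NS(X(p))\otimes\Q_\ell$ with all of $H^2(X(p))$, so $T_\ell(X(p))=0$ and $\mu(g^*;H^2(X(p)))$ coincides with $\mu(g^*;\NS(X(p)))$, a polynomial of degree $22$ over $\Z$. By Theorem \ref{thm:Salem} it factors as a product of cyclotomic polynomials times at most one Salem polynomial, so $g$ has Salem degree $22$ exactly when this polynomial is an \emph{irreducible} Salem polynomial of degree $22$ — equivalently, when $g$ has positive entropy and $\mu(g^*;\NS(X(p)))$ has no cyclotomic factor at all. (Theorem \ref{thm:NS} is then automatically consistent, since for supersingular $X(p)$ there is only $\NS$.) The task thus splits into (i) producing $g\in\Aut(X(p))$ of positive entropy, and (ii) ruling out every cyclotomic factor of its characteristic polynomial.

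For (i), in the spirit of this paper, I would use elliptic fibrations. Fix an elliptic fibration $\pi$ on $X(p)$ with small trivial lattice — ideally one of maximal Mordell--Weil rank, so that $\Triv(\pi)=U$ and $\NS(X(p))\otimes\Q=U\oplus(\MWL(X(p),\pi)\otimes\Q)$ with $\MWL$ positive definite of rank $20$. On such a model the inversion $\imath$ acts as the identity on $U$ and as $-1$ on the Mordell--Weil part, while each translation $t_P$, $P\in\MW(X(p),\pi)$, acts by an explicit unipotent matrix read off from the height pairing via Theorem \ref{thm:E}; in particular all these automorphisms preserve $\pi$ and hence have entropy zero. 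To obtain positive entropy one brings in a second elliptic fibration $\pi'$ with a different fibre class $F'$, giving another such family of explicit automorphisms, and takes a composition $g$ mixing automorphisms of $\pi$ and of $\pi'$; for a suitable choice, $g$ preserves no elliptic fibration at all — equivalently $g^*$ has no finite orbit on the isotropic classes of $\NS(X(p))$ — hence has positive entropy, and its matrix on $\NS(X(p))\otimes\Q$ is the corresponding product of explicit integer matrices. (This uses that $X(p)$ carries suitable elliptic fibrations for every $p$, which it does.) Alternatively one can dispense with explicit geometry and realise a chosen isometry of $\NS(X(p))$ by an automorphism via the crystalline Torelli theorem for the rigid supersingular K3 surface of Artin invariant one; here it is enough to realise \emph{some} power of that isometry, since a power of a Salem number is again a Salem number of the same degree.

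Step (ii) is the main obstacle. Ruling out cyclotomic factors of the $22\times 22$ characteristic polynomial is where the real work lies: one does it either by an explicit computation — factoring the polynomial, for instance modulo a carefully chosen small prime, or identifying it with a tabulated Salem polynomial — or by a structural argument forcing irreducibility of the $g^*$-action. In the Torelli variant there is the closely related extra difficulty that an isometry with a prescribed characteristic polynomial need not preserve the ample cone of $X(p)$, and correcting this by a reflection in a $(-2)$-class changes the polynomial, so the two conditions must be arranged simultaneously. This is exactly where the cited proofs diverge: Shimada argues explicitly, case by case in $p$, whereas Blanc--Cantat \cite{BC} and Esnault--Oguiso--Yu \cite{EOY} exploit the structure of $\Aut(X(p))$ as an arithmetic group acting on the tessellation of the positive cone by ample chambers, which makes enough isometries automatically algebraic once the possible cyclotomic factors are under control. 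Once a cyclotomic-factor-free $g$ is in hand, Theorem \ref{thm:Salem} certifies that $\mu(g^*;H^2(X(p)))$ is a genuine Salem polynomial of degree $22$.
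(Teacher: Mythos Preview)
The paper does not give its own proof of Theorem~\ref{thm:22}: it is stated as a result due to Blanc--Cantat \cite{BC}, Esnault--Oguiso--Yu \cite{EOY}, and Shimada \cite{Shimada-dyn}, and the paper's actual contribution is the explicit construction for odd $p\equiv 2\pmod 3$ in Theorem~\ref{thm}. So there is no in-paper proof to compare against; what the paper does say is that the cited arguments are ``mostly implicit, i.e.\ there are existence results building on elliptic fibrations (in particular of maximal rank) and some group theory'', with the caveat that \cite{BC}, \cite{EOY} omit $p=5,7,13$ and \cite{Shimada-dyn} handles finitely many primes case by case.

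Your plan is a fair high-level summary of those approaches and correctly isolates the two-step structure (positive entropy via mixing two elliptic fibrations or via Torelli, then exclusion of cyclotomic factors). But as written it is a plan, not a proof, and the gaps are exactly where the difficulty lies. First, you assume without argument that $X(p)$ admits a jacobian elliptic fibration with $\Triv=U$ (Mordell--Weil rank $20$); this is true but nontrivial and is one of the substantive inputs in \cite{EOY}. Second, your step~(ii) only names the obstacle: you neither produce a specific composition $g$ nor verify that its degree-$22$ characteristic polynomial is cyclotomic-free. The paper's own explicit argument for $p\equiv 2\pmod 3$ shows how delicate this is---it takes an eleven-fold composition of carefully chosen translations and an involution, and the resulting polynomial (depending on $p$) must then be checked against every cyclotomic polynomial of degree $\le 22$. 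Your Torelli alternative has the additional unresolved issue you yourself flag (compatibility with the ample cone after $(-2)$-reflections). In short: your outline matches the literature, but none of the decisive steps is carried out.
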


\begin{Corollary}
\label{cor:lift}
The automorphism $g\in\Aut(X(p))$ from Theorem \ref{thm:22}
does not lift to any characteristic zero model of $X(p)$.
\end{Corollary}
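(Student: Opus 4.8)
The plan is to argue by contradiction, following the strategy of Esnault--Oguiso \cite{EO}. A lift of $g$ to a characteristic zero model amounts to the following data: a complete discrete valuation ring $R$ of mixed characteristic $(0,p)$ with algebraically closed residue field, a smooth proper model $\mathcal X\to\mathrm{Spec}\,R$ with special fibre $X(p)$ (so that the generic fibre $\mathcal X_\eta$ is a K3 surface over $\mathrm{Frac}(R)$, a field of characteristic zero), and an automorphism $\tilde g$ of $\mathcal X$ over $R$ whose restriction to the special fibre is $g$. Completing and, if necessary, enlarging $R$, any lift of $g$ can be brought into this shape. Assume such a lift exists, and fix a prime $\ell\neq p$.

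The first step is to transport the characteristic polynomial across the specialisation. Smooth proper base change provides a canonical isomorphism between $H^2$ of the geometric generic fibre $\mathcal X_{\bar\eta}$ and $H^2$ of $X(p)$, and it is equivariant for the actions induced by $\tilde g$ on either side; hence
\[
\mu(\tilde g^*;H^2(\mathcal X_{\bar\eta}))=\mu(g^*;H^2(X(p))).
\]
By Theorem \ref{thm:22} the polynomial on the right has a Salem factor of degree $22=b_2$, so, being monic of degree $22$, it \emph{is} an irreducible Salem polynomial of degree $22$; consequently so is $\mu(\tilde g^*;H^2(\mathcal X_{\bar\eta}))$.

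The second step is to derive a contradiction from the characteristic zero geometry. Being a smooth proper surface over a field, $\mathcal X_{\bar\eta}$ is projective, so after fixing an embedding $\overline{\mathrm{Frac}(R)}\hookrightarrow\C$ it becomes a complex algebraic K3 surface. By Theorem \ref{thm:NS}, any Salem factor of $\mu(\tilde g^*;H^2(\mathcal X_{\bar\eta}))$ must divide $\mu(\tilde g^*;\NS(\mathcal X_{\bar\eta}))$, and the latter has degree $\rk\NS(\mathcal X_{\bar\eta})\le 20$ since the Picard number of a complex K3 surface is at most $20$. A polynomial of degree $22$ cannot divide one of degree at most $20$; this is the desired contradiction. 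Equivalently, entropy is preserved under lifting, $h(\tilde g)=h(g)=\log\alpha>0$, whereas a complex algebraic K3 surface admits no automorphism of Salem degree exceeding $20$.

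The only genuine subtlety lies in the first step, namely that a lift of $g$ acts on $H^2$ in characteristic zero through the \emph{same} characteristic polynomial; but this is exactly the equivariance of the isomorphism furnished by smooth proper base change, so no real obstacle arises. In effect the corollary is a formal consequence of Theorems \ref{thm:Salem}, \ref{thm:NS} and \ref{thm:22}; the substantial work --- carried out in the remainder of the paper --- is to construct automorphisms of Salem degree $22$ whose dynamics can be described explicitly.
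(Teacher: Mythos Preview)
Your argument is correct and follows essentially the same route as the paper: assume a lift $(\mathcal X,\tilde g)$ exists, use the equivariant specialisation isomorphism on $H^2$ to transport the characteristic polynomial to characteristic zero, then invoke Theorem~\ref{thm:NS} to force the Salem factor of degree $22$ into $\mu(\tilde g^*;\NS)$, contradicting the Lefschetz bound $\rho\le 20$. Your write-up is in fact more careful about the meaning of ``lift'' and about the base-change input than the paper's own recapitulation (given in the proof of Theorem~\ref{thm}), but the logical skeleton is identical.
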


We point out that  the automorphisms in Theorem \ref{thm:22}
are mostly implicit,
i.e. there are existence results building on elliptic fibrations (in particular of maximal rank)
and some group theory (see \cite{BC}, \cite{EOY}).
In contrast, until the completion of the first version of this paper it was
only for $p=3$ that there was an explicit $g\in\Aut(X(p))$ of Salem degree $22$ known \cite{EO},
building on the calculation of $\Aut(X(3))$ by Kond\= o and Shimada \cite{KS}.
In the meantime, Shimada used lattice-theoretic decriptions
of  involutions of double sextic models
to equip any supersingular K3 surface (of any Artin invariant!) for the first 1000 primes
with an automorphism of Salem degree $22$ \cite{Shimada-dyn}.

Our aim is to exhibit an infinite series of supersingular K3 surfaces, here $X(p)$ for odd primes $p\equiv 2\mod 3$,
with \emph{explicit} automorphisms of Salem degree $22$.
This will be achieved successively in the following sections.

\section{Singular and supersingular K3 surfaces}
\label{s:K3}

A complex K3 surface $X$ is called \emph{singular} (in the sense of exceptional as opposed to non-smooth)
if its Picard number attains the maximum
\[
\rho(X) = h^{1,1}(X) = 20.
\]
Singular K3 surfaces played an instrumental role in the proof of the Torelli theorem
and of the surjectivity of the period map for K3 surfaces, see \cite{SI}, \cite{SM}.
In particular, this led to the notion of a \emph{Shioda-Inose structure}
which relates $X$ to the  product of two isogenous CM-elliptic curves $E\times E'$
through rational degree $2$ maps to the Kummer surface $\Km(E\times E')$:
 \begin{eqnarray}
 \label{eq:SI}
  \xymatrix{E\times E' \ar@{-->}[dr] && X\ar@{-->}[dl]&\\
 & \Km(E\times E')&& T(E\times E')\cong T(X)}
 \end{eqnarray}


We emphasize that when working over non-closed fields,
the above construction can always be carried out over a certain finite extension
of the ground field
(and that a singular K3 surface always admits a model over the ring class field $H(d)$
where $d<0$ denotes the discriminant of $\NS(X)$ by \cite{S-fields}).
Therefore,
classical CM theory for elliptic curves easily gives the following statement for singular K3 surfaces:

\begin{Proposition}
\label{prop:red}
Let $X$ be a singular K3 surface, defined over some number field $K$.
Let $\p$ be a prime  of good reduction above $p\in\N$.
Then $X\otimes\bar\F_\p$ is supersingular if $p$ is inert in $K$.
\end{Proposition}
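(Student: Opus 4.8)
The plan is to exploit the Shioda–Inose structure \eqref{eq:SI} to reduce the statement for the K3 surface $X$ to the corresponding, classical, statement for elliptic curves with complex multiplication. First I would recall that by \cite{S-fields} we may enlarge $K$ (replacing it by a suitable finite extension and $\p$ by a prime above it, which does not affect inertness of the residue characteristic if we are careful, or alternatively we simply take $K$ large enough from the outset) so that $X$ admits the full Shioda–Inose diagram over $K$: there are CM elliptic curves $E, E'$ over $K$, isogenous over $K$, with $T(X) \cong T(E\times E')$ as Hodge structures, and rational degree-two maps $X \dashrightarrow \Km(E\times E') \dashleftarrow E\times E'$. Since $\p$ is a prime of good reduction for $X$, after a further harmless finite base change we may assume $E$, $E'$ and the whole diagram have good reduction at $\p$ as well; the maps in \eqref{eq:SI} are generically finite of degree $2$, hence induce isomorphisms on transcendental parts of $\ell$-adic cohomology (for $\ell \neq p$) after $\otimes\,\Q_\ell$.

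Next I would translate supersingularity into a statement about Frobenius eigenvalues. The reduction $X\otimes\bar\F_\p$ is supersingular if and only if $\rho(X\otimes\bar\F_\p) = 22$, equivalently the transcendental part $T_\ell(X\otimes\bar\F_\p)$ vanishes, equivalently every eigenvalue of geometric Frobenius on $H^2_\text{\'et}$ is $q$ times a root of unity (where $q = \#k(\p)$); by the compatibility above it suffices that the geometric Frobenius eigenvalues on $H^2_\text{\'et}(\,\overline{E\times E'}, \Q_\ell)$, and hence on $H^1_\text{\'et}(\bar E,\Q_\ell)\otimes H^1_\text{\'et}(\bar E',\Q_\ell)$, all have this shape. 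So the whole question comes down to: the reductions $E\otimes\bar\F_\p$ and $E'\otimes\bar\F_\p$ are both supersingular elliptic curves.

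Now comes the CM input, which is the heart of the argument. Write $F = \Q(\sqrt{d})$ for the imaginary quadratic CM field of $E$ (and $E'$, since they are isogenous); by \cite{S-fields} we may take $K \supseteq H(d) \supseteq F$. A classical theorem of Deuring says that a CM elliptic curve with good reduction at a prime $\p$ has supersingular reduction precisely when the rational prime $p$ below $\p$ does \emph{not} split in $F$, i.e. $p$ is inert or ramified in $F$. Since $p$ is inert in $K$ and $F \subseteq K$, the prime $p$ is in particular non-split in $F$ (it cannot split in the subfield of a field in which it is inert), so Deuring's criterion applies and $E\otimes\bar\F_\p$, $E'\otimes\bar\F_\p$ are supersingular. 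Combining with the previous paragraph gives that all Frobenius eigenvalues on $H^2$ of $X\otimes\bar\F_\p$ are $q$ times roots of unity, so $\rho = 22$ and the reduction is supersingular, as claimed.

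The main obstacle — really the only subtle point — is the bookkeeping around base change: the statement as phrased fixes $X/K$ and asks about inertness in $K$, whereas the Shioda–Inose machinery and Deuring's criterion naturally live over the larger field $H(d)$ (and may require a further extension $K'/K$ to realize the maps and the good reduction of $E, E'$). One must check that "$p$ inert in $K$" still forces "$p$ non-split in $F$" after these extensions; this is automatic for the subfield inclusion $F\subseteq K$ but needs a word if one passes to $K' \supsetneq K$, and in fact Deuring's criterion only needs non-split, which is exactly what a subfield of an inert situation inherits. I would also remark that the same argument shows slightly more — inertness in the genuinely relevant field $F$ already suffices — and that ramified primes are handled identically, but these refinements are not needed for the statement.
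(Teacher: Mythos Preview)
Your proposal is correct and follows precisely the route the paper itself indicates. The paper does not spell out a proof of this proposition; it simply remarks that the Shioda--Inose structure \eqref{eq:SI} can be realized over a finite extension of the ground field and then asserts that ``classical CM theory for elliptic curves easily gives'' the statement. Your argument --- reduce via \eqref{eq:SI} to the CM elliptic curves $E,E'$, then apply Deuring's criterion that $p$ non-split in the CM field $F$ forces supersingular reduction --- is exactly this. Your discussion of the base-change bookkeeping (in particular the observation that one only needs $F\subseteq K$ so that $p$ inert in $K$ forces $p$ non-split in $F$, and that further enlargement of $K$ to realize the diagram is harmless for Deuring's criterion) is in fact more explicit than anything the paper provides.
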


More precisely, it is known that any supersingular reduction $X\otimes\bar\F_\p$
will have Artin invariant $\sigma=1$,
i.e. $\NS(X\otimes\bar\F_\p)$ has rank $22$ and discriminant $-p^{2\sigma}$ by \cite{Shimada}.
In consequence, Proposition \ref{prop:red} provides a systematic way
to produce projective models for $X(p)$ for all $p$ in some given arithmetic progressions.
Lattice theoretically,
there is a conceptual formulation for this connection due to Shioda \cite{Sh-Murre}:
$X$ admits a certain elliptic fibration (crucial to  \eqref{eq:SI} and nowadays often referred to as Inose's pencil)
such that
\begin{eqnarray}
\label{eq:Hom}
\MWL(X) \cong \Hom(E,E')[2].
\end{eqnarray}
Here $\Hom(E,E')$ is endowed with a lattice structure
by means of the degree,
and the intersection pairing on $\MWL(X)$ is that of $\Hom(E,E')$ scaled by $2$.
By \cite{HSa}, the isometry \eqref{eq:Hom} can even be made Galois-equivariant. 

\section{Isotrivial elliptic fibration}
\label{s:iso}

It is rather exceptional that the whole construction from the previous section can  be made explicit in terms of equations
including generators of $\NS(X(p))$.
Here we exploit one such instance:
the minimal case $d=-3$ where the singular K3 surface $X$ admits an isotrivial elliptic fibration
which has been studied from other perspectives in \cite{S-Michigan}:
\begin{eqnarray}
\label{eq:3IV*}
X:\;\;\; y^2 + t^2 (t-1)^2 y = x^3.
\end{eqnarray}
The corresponding fibration
\[
\pi: X\to \PP^1_t
\]
has three singular fibers of Kodaira type $IV^*$ at $t=0,1,\infty$ (as long as the characteristic is different from $3$)
and 3-torsion sections $(0,0), (0,-t^2(t-1)^2)$,
so that standard formulae indeed confirm rank and discriminant over $\C$.
More precisely,
fiber components and zero sections generate a sublattice
\[
U+E_6^3 \subset \NS(X)
\]
of index $3$ where the 3-divisible classes can be identified with the 3-torsion sections by Theorem \ref{thm:E}.
For later reference, we fix a basis of $\NS(X)$ starting with  the general fiber
and with further divisors indicated by the numbers in the following diagram 
of $(-2)$-curves on $X$
(where 7a and 7b mean that the 7th basis element is the sum of the two fibre components).

\begin{figure}[ht!]

\setlength{\unitlength}{1.2mm}
\begin{picture}(50,50)(0,5)
  \qbezier(25.000,10.000)(33.284,10.000)
          (39.142,15.858)
  \qbezier(39.142,15.858)(45.000,21.716)
          (45.000,30.000)
  \qbezier(45.000,30.000)(45.000,38.284)
          (39.142,44.142)
  \qbezier(39.142,44.142)(33.284,50.000)
          (25.000,50.000)
  \qbezier(25.000,50.000)(16.716,50.000)
          (10.858,44.142)
  \qbezier(10.858,44.142)( 5.000,38.284)
          ( 5.000,30.000)
  \qbezier( 5.000,30.000)( 5.000,21.716)
          (10.858,15.858)
  \qbezier(10.858,15.858)(16.716,10.000)
          (25.000,10.000)
%

\put(25., 50.){\circle*{1.5}}   
\put(26,47){\small 2}

\put(31.84, 48.79){\circle*{1.5}}  

\put(37.86, 45.32){\circle*{1.5}}  
\put(35.5,43){\small 8}

\put( 42.32, 40.){\circle*{1.5}}  
\put(41.2,36.5){\small 9}

\put(44.70, 33.47){\circle*{1.5}}  
\put(41.1,30.5){\small 12}

\put(44.70, 26.53){\circle*{1.5}}  
\put(40.5,24.5){\small 13}

\put( 42.32, 20.){\circle*{1.5}}  
\put(37.5,18.5){\small 20}

\put(37.86, 14.68){\circle*{1.5}}  
\put(33.5,14.5){\small 19}

\put(31.84, 11.21){\circle*{1.5}} 
\put(28,12){\small 18}
 
\put(  25., 10.){\circle*{1.5}}  
\put(21.5,11.5){\small 15}

\put(18.16, 11.21){\circle*{1.5}}  
\put(16,12.8){\small 16}

\put(12.14, 14.68){\circle*{1.5}}  
\put(11,16.5){\small 17}

\put( 7.68, 20.){\circle*{1.5}}  
\put(5.30, 26.53){\circle*{1.5}}  
\put(6.7,25.5){\small 6}

\put(5.30, 33.47){\circle*{1.5}}  
\put(6.7,32.5){\small 5}

\put( 7.68, 40.){\circle*{1.5}}  
\put(7.8,36.7){\small 4}

\put(12.14, 45.32){\circle*{1.5}}  
\put(13,43){\small 3}
\put(18.16, 48.79){\circle*{1.5}}

\put(25,50){\line(0,-1){40}}
\put(25,40){\circle*{1.5}}  
\put(25,20){\circle*{1.5}} 
\put(26.5,20){\small 14} 


\qbezier( 42.32, 40.)(25,30),( 7.68, 20.)
\qbezier( 42.32, 20.)(25,30),( 7.68, 40.)

\put(33.66,35){\circle*{1.5}}  
\put(34.8,32.5){\small 10}

\put(16.34,25){\circle*{1.5}}  
\put(17.5,22.5){\small 11}

\put(33.66,25){\circle*{1.5}}  
\put(34.8,26){\small 7b}

\put(16.34,35){\circle*{1.5}}  
\put(17.5,36){\small 7a}

\put(22.5,51){$O$}
\put( 43.32, 18.){$Q$}  
\put( 2.18, 18.){$2Q$}
\put( -3, 42.){$(t=0)$}
\put( 43.32, 42.){$(t=\infty)$}
\put(20.6,5.5){$(t=1)$}

\end{picture}
\caption{24 $(-2)$-curves supporting singular fibers and torsion sections of $\pi$}
\label{Fig:1}
\end{figure}
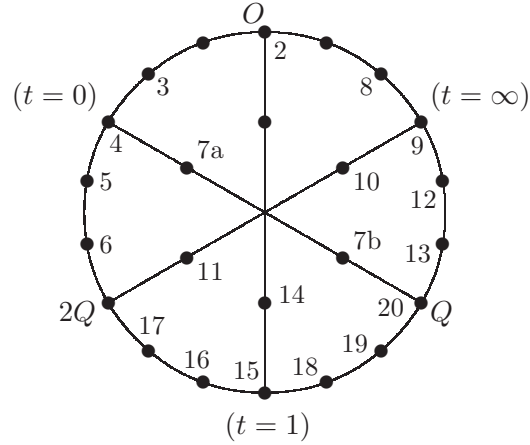

The isotriviality  allows us to pull-back $\pi$ from a rational elliptic surface $S$
by a purely inseparable base change of degree $p$ for any $p\equiv 2\mod 3$
(including $p=2$):
\begin{eqnarray}
\label{eq:RES}
S:\;\;\; y^2 + t (t-1) y = x^3.
\end{eqnarray}
Conceptually, this is a consequence of the specific singular fibers
and their behaviour under base change;
explicitly it can be derived by minimalising the resulting equation
by rescaling $x$ and $y$.
It follows that $X(p)$ is a Zariski surface,
and we obtain $\MWL(X(p) ,\pi)\cong A_2^\vee(p)$
abstractly from the classification by Oguiso-Shioda \cite{OS} through functoriality
\cite[Prop. 8.12]{Sh-MW}.
Alternatively, we can work out generators of $\MW(S)$ explicitly;
for instance, the sections $P_0=(t,t)$ and $\omega P_0=(\omega t,t)$ together with the 3-torsion section $Q=(0,0)$
suffice where $\omega$ simutaneously denotes a primitive third root of unity and the corresponding automorphism
of the generic fiber of  \eqref{eq:3IV*} and \eqref{eq:RES}.
On the elliptic fibration given by \eqref{eq:3IV*}, these induce the following sections, expressed in terms of $p=3n+2$:
\[
P = (t^{n+2}/(t-1)^{2n}, t^2/(t-1)^{3n}),
\;\; \;\; \omega P = (\omega t^{n+2}/(t-1)^{2n}, t^2/(t-1)^{3n}).
\]
One computes the following intersection numbers between sections (symmetric in $P$ and $\omega P$):
\[
P.O = P.Q = P.2Q = n,\;\;\; P.\omega P = 3n.
\]
In addition, $P$ meets the $IV^*$ fibers at $t=\infty$ in the same component as $Q$ ($\# 13$),
at $t=0$ in the other non-identity component compared with $Q$  ($\# 6$)
and at $t=1$ in the identity component.
In terms of the height pairing \cite{Sh-MW} one thus easily verifies that
\[
\langle P,O\rangle = \langle P,Q\rangle = \langle P,2Q\rangle = 0,\;\;\; 
\langle P,P\rangle = 2p/3, \;\;\; \langle P,\omega P\rangle = -p/3,
\]
confirming $\MWL(X(p)) \cong A_2^\vee(p)$.
Translation by $P$ on the generic fiber of \eqref{eq:3IV*} 
gives an automorphism $\boldsymbol{\tau\in\Aut(X(p))}$.
Using Theorem \ref{thm:E}, it is not hard to work out the action of $\tau^*$ on $\NS(X(p))$.
For instance, $\tau^*$ rotates both $IV^*$ fibers at $t=0, \infty$
while leaving the fiber at $t=1$  invariant componentwise.
We illustrate this with two more explicit examples.
Complementing the above basis of $\NS(X)$ by $P, \omega P$ for a $\Z$-basis of $\NS(X(p))$,
we can write
\[
{P+Q = (4,2,-1,-2,-1,0,-2,-2,-4,-3,-2,-3,-2,-3,-6,-4,-2,-5,-4,-2,1,0)}
\]
or
\[
2P = (2 n+6,2,-2,-4,-2,0,-4,-2,-4,-3,-2,-3,-2,-3,-6,-4,-2,-5,-4,-3,2,0) 
\]
Here is a quick guide how to find these representations:
first subtract $O$ to obtain a divisor $D$ with $D.F=1$;
then add and subtract fiber components until $D$ meets each fiber at exactly one component
(partly predicted by the group structure on the smooth locus of the fiber);
finally add a multiple of $F$ such that $D^2=-2$.

In exactly the same way,
we define the automorphism $\boldsymbol{\tau^\omega\in\Aut(X(p))}$
which is given by translation by the section $\omega P$.

The full matrices representing $\tau$ and $\tau^\omega$ in the above basis of $\NS(X(p))$ 
are available from the author's homepage;
the same goes for two further automorphisms
of $X(p)$ which we will develop in the next two sections,
and subsequent calculation data.
Together the four automorphisms will combine for the automorphism 
proving Theorem \ref{thm}.

\section{Alternative elliptic fibration}

One of the keys for the implicit results in \cite{BC}, \cite{EOY}
is the special feature that a  K3 surface
may admit different elliptic fibrations.
For convenience, we shall only work with fibrations
which are already visible on $X$.
By work of Nishiyama \cite{Nishi},
there are 6 such up to isomorphism,
each uniquely encoded in the singular fibers.

In detail, we work with the elliptic fibration which has a fiber of Kodaira type $I_{18}$.
In Figure \ref{Fig:1}, this is visible as the outer circle of $(-2)$-curves.
Note that by general theory, this divisor will in fact induce the fibration
\[
\pi': X\to \PP^1
\]
 in question.
Over $\C$, it is easy to work out $\MW(X,\pi')$:
choosing some curve as zero section $O'$ for $\pi'$,
there is a 3-torsion section $Q'$ and a section $R'$ of height $3/2$
as indicated in Figure \ref{Fig:2}.
By standard formulae for the discriminant \cite[(22)]{SSh}, $Q'$ and $R'$ generate $\MW(X,\pi')$.

\begin{figure}[ht!]

\setlength{\unitlength}{1.2mm}
\begin{picture}(50,50)(0,5)
  \qbezier(25.000,10.000)(33.284,10.000)
          (39.142,15.858)
  \qbezier(39.142,15.858)(45.000,21.716)
          (45.000,30.000)
  \qbezier(45.000,30.000)(45.000,38.284)
          (39.142,44.142)
  \qbezier(39.142,44.142)(33.284,50.000)
          (25.000,50.000)
  \qbezier(25.000,50.000)(16.716,50.000)
          (10.858,44.142)
  \qbezier(10.858,44.142)( 5.000,38.284)
          ( 5.000,30.000)
  \qbezier( 5.000,30.000)( 5.000,21.716)
          (10.858,15.858)
  \qbezier(10.858,15.858)(16.716,10.000)
          (25.000,10.000)
%

\put(25., 50.){\circle*{1.5}}   

\put(31.84, 48.79){\circle*{1.5}}  

\put(37.86, 45.32){\circle*{1.5}}  

\put( 42.32, 40.){\circle*{1.5}}  

\put(44.70, 33.47){\circle*{1.5}}  

\put(44.70, 26.53){\circle*{1.5}}  

\put( 42.32, 20.){\circle*{1.5}}  

\put(37.86, 14.68){\circle*{1.5}}  

\put(31.84, 11.21){\circle*{1.5}} 
 
\put(  25., 10.){\circle*{1.5}}  

\put(18.16, 11.21){\circle*{1.5}}  

\put(12.14, 14.68){\circle*{1.5}}  

\put( 7.68, 20.){\circle*{1.5}}  
\put(5.30, 26.53){\circle*{1.5}}  

\put(5.30, 33.47){\circle*{1.5}}  

\put( 7.68, 40.){\circle*{1.5}}  

\put(12.14, 45.32){\circle*{1.5}}  
\put(18.16, 48.79){\circle*{1.5}}

\put(25,50){\line(0,-1){40}}
\put(25,40){\circle*{1.5}} 
\put(26.5,40){$R'$} 
 
\put(25,20){\circle*{1.5}} 
\put(26.5,20){$2Q'$} 


\qbezier( 42.32, 40.)(25,30),( 7.68, 20.)
\qbezier( 42.32, 20.)(25,30),( 7.68, 40.)

\put(33.66,35){\circle*{1.5}}  
\put(34.8,32.5){$Q'$}

\put(16.34,25){\circle*{1.5}}  
\put(8.5,27){\footnotesize $R'+2Q'$}

\put(33.66,25){\circle*{1.5}}  
\put(32.5,26.5){\footnotesize $R'+Q'$}

\put(16.34,35){\circle*{1.5}}  
\put(17.5,36){$O'$}


\end{picture}
\caption{$I_{18}$ fiber and 6 sections}
\label{Fig:2}
\end{figure}
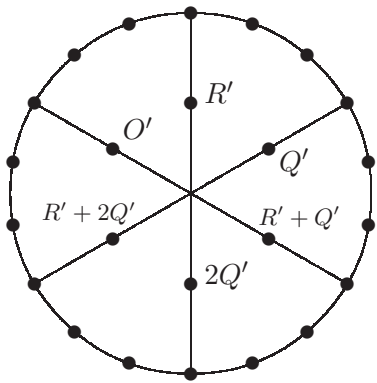

Next we consider $\pi'$ as a fibration on $X(p)$
for an \emph{odd} prime $p=3n+2$ as above.
The congruence assumption on $p$ ensures that the fibration does not degenerate (cf.~Rem.~\ref{rem:p=2}),
and $\MW(X(p),\pi')$ has rank $3$
(cf.~Prop.~\ref{prop:red}).
Generators complementing $Q', R'$ are readily obtained 
from the degree $p$-multisections $P, \omega P$.
By Theorem \ref{thm:E},
these induce sections $P', \omega P'$.
For our purposes, it suffices to describe the sections 
as divisors
in terms of  the multisections and elements from the trivial lattice of $\pi'$
as in section \ref{s:iso}
(subtracting $(p-1)O$ and then proceeding as before).


For completeness we list  the resulting intersection numbers (again symmetric in $P', \omega P'$, and in perfect agreement with $\disc\NS(X(p))=-p^2$):
$$
\begin{array}{lcllcl}
P'.O' & = & (3n^2+8n+1)/4, \;\;\;\;\; &  
P'.Q' & = & (3n+5)  (n+1)/4.\\
P'.R' & = & (3n^2+2n+3)/4, & 
P'.\omega P' & = & 3n.
\end{array}
$$

As in section \ref{s:iso}, we can then define
 an automorphism $\boldsymbol{\tau'\in\Aut(X(p))}$
 by translation by $P'$ on the generic fiber of $\pi'$
and compute the induced action on $\NS(X(p))$.

\begin{Remark}
\label{rem:p=2}
At $p=2$, the fibration $\pi'$ degenerates
as it attains three additional reducible fibers
of Kodaira type $I_2$ (with fiber components given by the sections $-P, -\omega P, -\omega^2 P$).
Since $\MW(X(2),\pi')\cong \Z/6\Z$,
 the above computations cannot carry over.
Indeed, on the contrary, the multisection $P$ induces
the 3-torsion section $Q'$ for $\pi'$.
\end{Remark}


\section{Extra involution}
\label{s:extra}

In order to prove Theorem \ref{thm}, 
it is crucial to throw in an involution
which does not respect the elliptic fibration \eqref{eq:3IV*} above.
Almost to the contrary, it exploits a symmetry in Figure \ref{Fig:1}
which makes the fibration with three fibers of type $IV^*$ visible 
in two essentially different ways: 
the one depicted in Figure \ref{Fig:1} and the one which features the original torsion sections
$O, Q, 2Q$ as triple components of the singular fibers:

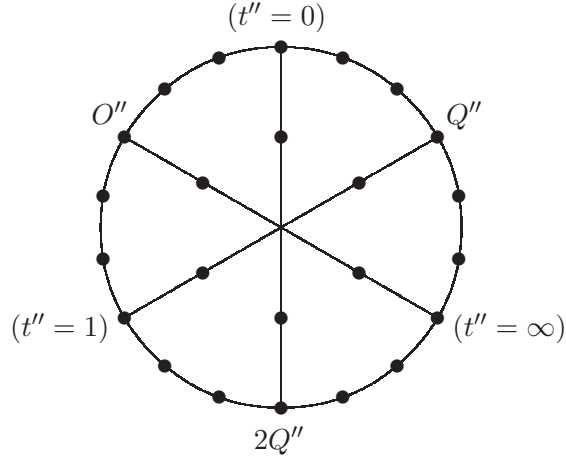
\begin{figure}[ht!]

\setlength{\unitlength}{1.2mm}
\begin{picture}(50,50)(0,5)
  \qbezier(25.000,10.000)(33.284,10.000)
          (39.142,15.858)
  \qbezier(39.142,15.858)(45.000,21.716)
          (45.000,30.000)
  \qbezier(45.000,30.000)(45.000,38.284)
          (39.142,44.142)
  \qbezier(39.142,44.142)(33.284,50.000)
          (25.000,50.000)
  \qbezier(25.000,50.000)(16.716,50.000)
          (10.858,44.142)
  \qbezier(10.858,44.142)( 5.000,38.284)
          ( 5.000,30.000)
  \qbezier( 5.000,30.000)( 5.000,21.716)
          (10.858,15.858)
  \qbezier(10.858,15.858)(16.716,10.000)
          (25.000,10.000)
%

\put(25., 50.){\circle*{1.5}}   

\put(31.84, 48.79){\circle*{1.5}}  

\put(37.86, 45.32){\circle*{1.5}}  

\put( 42.32, 40.){\circle*{1.5}}  

\put(44.70, 33.47){\circle*{1.5}}  

\put(44.70, 26.53){\circle*{1.5}}  

\put( 42.32, 20.){\circle*{1.5}}  

\put(37.86, 14.68){\circle*{1.5}}  

\put(31.84, 11.21){\circle*{1.5}} 
 
\put(  25., 10.){\circle*{1.5}}  

\put(18.16, 11.21){\circle*{1.5}}  

\put(12.14, 14.68){\circle*{1.5}}  

\put( 7.68, 20.){\circle*{1.5}}  
\put(5.30, 26.53){\circle*{1.5}}  

\put(5.30, 33.47){\circle*{1.5}}  

\put( 7.68, 40.){\circle*{1.5}}  

\put(12.14, 45.32){\circle*{1.5}}  
\put(18.16, 48.79){\circle*{1.5}}

\put(25,50){\line(0,-1){40}}
\put(25,40){\circle*{1.5}} 
 
\put(25,20){\circle*{1.5}} 


\qbezier( 42.32, 40.)(25,30),( 7.68, 20.)
\qbezier( 42.32, 20.)(25,30),( 7.68, 40.)

\put(33.66,35){\circle*{1.5}}  

\put(16.34,25){\circle*{1.5}}  

\put(33.66,25){\circle*{1.5}}  

\put(16.34,35){\circle*{1.5}}  

\put(19.0,52.5){$(t''=0)$}
\put( 44.02, 18.){$(t''=\infty)$}  
\put( -5, 18.){$(t''=1)$}
\put( 4, 41.5){$O''$}
\put( 43.32, 41.5){$Q''$}
\put(22,5.5){$2Q''$}

\end{picture}
\caption{Isomorphic fibration with three $IV^*$ fibers}
\label{Fig:3}
\end{figure}

By the uniqueness of the fibration, 
there is an  automorphism $\boldsymbol{\imath\in\Aut(X)}$ (and $X(p)$) switching these two models of
the fibration.
Explicitly, this can be derived after rescaling $x, y$ in \eqref{eq:3IV*}
from the resulting symmetry in $y,t$ (up to sign):
\[
X:\;\;\; (y^2+y)t(t-1) = x^3.
\]
For the new fibration, $P$ and $\omega P$ again induce multisections of degree $p$;
modulo the trivial lattice they are equivalent to $-P, -\omega P$ in terms of the standard basis.
Again, one can thus spell out the induced action of $\imath^*$ on $\NS(X(p))$.

\section{Non-liftable automorphism}

Consider the following automorphism on $X(p)$:
\[
g = \tau'\circ\imath\circ\tau'\circ\imath
\circ\tau^\omega\circ\imath
\circ\tau'\circ\imath\circ\tau'\circ\imath\circ\tau\in\Aut(X(p)).
\]
With a computer algebra system, one computes the characteristic polynomial 
of $g^*$ on $\NS(X(p))$:
\begin{eqnarray}
\label{eq:g}
\mu(g^*) = x^{11}\phi(x+1/x),
\end{eqnarray}
where 
\begin{eqnarray*}
\phi(x) & = &
6804+11016\,n+2187\,{n}^{2}+ \left( -6804\,n-20304+7128\,{n}^{2} \right) x\\
&&
\mbox{}- \left( 16443\,{n}^{2}+64281\,n+34254 \right) {x}^{2}- \left(17442\,{n}^{2}-20925\,n-56020 \right) {x}^{3}\\
&&
\mbox{}+ \left( 107487\,n+31536\,{n}^{2}+63852 \right) {x}^{4}+ \left( 12987\,{n}^{2}-16794\,n-43992 \right) {x}^{5}\\
&&
\mbox{}- \left( 22545\,{n}^{2}+70167\,n+44646 \right) {x}^{6}- \left( 3780\,{n}^{2}-5157\,n-13414 \right) {x}^{7}\\
&&
\mbox{}+ \left( 6696\,{n}^{2}+19467\,n+12900 \right) {x}^{8}+ \left( 378\,{n}^{2}-540\,n-1404 \right) {x}^{9}\\
&&
\mbox{}+ \left( -1308-702\,{n}^{2}-1929\,n \right) {x}^{10} +  {x}^{11}
\end{eqnarray*}

\begin{Lemma}
\label{lem:22}
The polynomial $\psi(x)$ is an  irreducible Salem polynomial over $\Z$
for any $n\in\N$ such that $3n+2$ is prime.
\end{Lemma}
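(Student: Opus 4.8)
The plan is to obtain the Salem property directly from the factorisation provided by Theorem~\ref{thm:Salem}, feeding in only two elementary facts about the explicit polynomial. Write $\psi(x)=\mu(g^*)=x^{11}\phi(x+1/x)$ as in \eqref{eq:g}: this is a monic polynomial of degree $22$, manifestly reciprocal by its shape, with $\phi$ the displayed monic polynomial of degree $11$. Since $X(p)$ is supersingular, $\NS(X(p))\otimes\Q_\ell$ exhausts $H^2(X(p))$, so $T_\ell(X(p))=0$ and $\psi=\mu(g^*;H^2(X(p)))$. By Theorem~\ref{thm:Salem}, $\psi$ is then a product of cyclotomic polynomials times at most one Salem polynomial $S$. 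It therefore suffices to show: (i) that $S$ is non-trivial, i.e.\ $\psi$ is not a product of cyclotomics; and (ii) that $\psi$ has no cyclotomic factor at all. Granting these, $\psi=S$ is an irreducible Salem polynomial of degree $22$, as claimed.

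Step~(i) is immediate once one computes $\phi(2)$: substituting $x=2$ into the formula for $\phi$ collapses everything to $\phi(2)=-3^6p^2$ (up to sign this is $\det(\id-g^*)$, which accounts for the factor $\disc\NS(X(p))=-p^2$). In particular $\phi(2)<0$, and as $\phi$ is monic of odd degree $11$ it has a real root $s>2$; then $\alpha=\tfrac{1}{2}(s+\sqrt{s^2-4})>1$ is a root of $\psi$ of absolute value $\neq 1$, so $\psi$ is not a product of cyclotomic polynomials. The same computation gives $\psi(1)=\phi(2)\neq 0$, so already $\Phi_1\nmid\psi$.

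Step~(ii) is the crux, since it must hold uniformly in $p$. The key reduction is that for $\zeta_m$ a primitive $m$-th root of unity one has $\Phi_m\mid\psi$ iff $\psi(\zeta_m)=0$ iff $\phi(\zeta_m+\zeta_m^{-1})=0$ (as $\zeta_m^{11}\neq 0$), i.e.\ iff $\Psi_m\mid\phi$ in $\Z[x]$, where $\Psi_m$ is the minimal polynomial of $2\cos(2\pi/m)$, of degree $\varphi(m)/2$ for $m\ge 3$ and of degree $1$ for $m\in\{1,2\}$. This forces $\varphi(m)\le 2\deg\phi=22$, leaving only finitely many $m$. For each such $m$, divisibility $\Psi_m\mid\phi$ amounts to the vanishing of the at most $\deg\Psi_m\le 11$ coefficients of the remainder $\phi\bmod\Psi_m$, each a polynomial of degree at most $2$ in $n$ (the coefficients of $\phi$ being such). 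Hence $\Psi_m\mid\phi$ imposes a system of quadratic equations in the single unknown $n$; a computer-assisted pass through the finitely many $m$ with $\varphi(m)\le 22$ verifies that no such system has a solution $n\in\N$ with $3n+2$ prime. In most cases the system is already inconsistent over $\Q$; the few values of $n$ solving a non-trivial subset of the equations, together with the smallest primes (notably $p=2$, where $\psi$ is simply checked directly), are dealt with individually.

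The main obstacle is precisely the uniformity in Step~(ii). What makes it feasible is a twofold finiteness: the bound $\varphi(m)\le 22$ cuts the list of possible cyclotomic factors down to finitely many, and the coefficients of $\phi$ having degree at most $2$ in $n$ turns each remaining case into finitely many numerical checks. Step~(i) and the reduction through Theorem~\ref{thm:Salem} are routine by comparison.
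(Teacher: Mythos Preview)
Your argument is correct and follows the paper's route: use Theorem~\ref{thm:Salem} to reduce irreducibility to the absence of cyclotomic factors, then rule these out by a finite computer check that is uniform in $n$ because the coefficients of $\phi$ are quadratic in $n$. Your Step~(i), computing $\phi(2)=-3^6p^2<0$ to exhibit a real root of $\psi$ outside the unit circle, is a pleasant addition but logically subsumed by Step~(ii); the paper omits it and phrases the cyclotomic check via $\Phi_m\mid\psi$ rather than your equivalent $\Psi_m\mid\phi$, but this is purely cosmetic.
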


\begin{proof}
Let  $n\in\N$ such that $p=3n+2$ is prime.
Then $\psi(x)$ comes from the automorphism $g\in\Aut(X(p))$
sketched above.
If $\psi(x)$ were not irreducible over $\Z$,
then it would split off a cyclotomic factor by Theorem \ref{thm:Salem}.
This, however, can  be falsified simultaneously for all $n\in\N$
by computing the remainders after division with each cyclotomic polynomial
of degree at most $22$.
\end{proof}

\subsection*{Proof of Theorem \ref{thm}}

We have verified
for any odd prime $p\equiv 2\mod 3$  in Lemma \ref{lem:22}
that $g\in\Aut(X(p))$ is an (irreducible) Salem polynomial of degree $22$. 
For the sake of completeness, we recall 
why $g$ does not lift to any characteristic zero model of $X(p)$.
Assume to the contrary that there is a lift $\mX$ of $X(p)$ 
over some field $K$ of characteristic zero,
with some automorphism $\tilde g\in\Aut(\mX)$ lifting $g$.
Since the isomorphism $H^2(\mX_{\bar K})\cong H^2(X(p)_{\bar \F_p})$ is equivariant for the action of $\tilde g^*$ resp. $g^*$,
this implies
\[
\mu(\tilde g^*) = \mu(g^*).
\]
By Theorem \ref{thm:NS}, the Salem factor $\phi(x)$ can only be attained
on $\NS(\mX_{\bar K})$,
hence 
\[
\rho(\mX_{\bar K})=\deg\phi(x) = 22.
\]
This contradicts Lefschetz' bound $\rho(\mX_{\bar K})\leq h^{1,1}(\mX_{\bar K})=20$.
The same reasoning applies to any non-trivial power of $g$,
since the characteristic polynomial remains irreducible of degree $22$, and in particular Salem.
\qed

\begin{Remark}
One can exhibit explicit automorphisms on $X(p)$ for odd $p\equiv 2\mod 3$
already as a ninefold composition of our automorphisms $\tau, \tau^\omega,\tau', \iota$,
but their characteristic polynomials turn out more complicated than $\mu(g^*)$.
On the other hand,
we can exhibit 
a slightly less complicated automorphism of Salem degree 20,
\[
g' = \tau'\circ\imath\circ\tau'\circ\imath
\circ\tau'\circ\imath\circ\tau'\circ\imath\circ\tau\in\Aut(X(p)),
\]
which does not lift to any characteristic zero model of $X(p)$.
This has characteristic polynomial $\mu(g'^*)=(x-1)(x+1)\psi(x)$
for some Salem polynomial $\psi(x)$ of degree $20$.
Since the latter factor has to be attained on $\NS$ of any characteristic zero lift,
one can apply Lemma \ref{lem:T} to the remaining linear factors of $\mu(g'^*)$ 
to establish a contradiction against lifting.
We emphasise, though, that this non-lifting argument does not apply to $g'^2$.
%
%
%
\end{Remark}

\begin{Remark}
The polynomial $\mu(g^*)$ is also irreducible over $\Q$
for $p=2$, i.e. $n=0$.
However, it is not immediate to deduce that it is the characteristic polynomial
of an automorphism on $X(2)$,
since the fibration $\pi'$ degenerates, see Remark \ref{rem:p=2}.
\end{Remark}

\subsection*{Acknowledgements}

The isotrivial fibrations first came up in discussions with Tetsuji Shioda
to whom I am greatly indebted.
I benefitted greatly from comments by Simon Brandhorst,
 H\'el\`ene Esnault and Keiji Oguiso
on an earlier version of the paper.
Thanks also to V\'ictor Gonz\'alez-Alonso and Jaap Top for discussions on the subject.

\end{document}